\title{Point modules of quantum projective spaces}
\author{Kevin De Laet}
\address{Department of Mathematics, University of Antwerp \\ 
 Middelheimlaan 1, B-2020 Antwerp (Belgium) \\ {\tt kevin.delaet@uantwerpen.be}}
\author{Lieven Le Bruyn}
\address{Department of Mathematics, University of Antwerp \\ 
 Middelheimlaan 1, B-2020 Antwerp (Belgium) \\ {\tt lieven.lebruyn@uantwerpen.be}}
\date{}
\tikzset{
  vertice/.style={circle,draw=black},
  decoration={markings,mark=at position 0.5 with {\arrow{>}}}
}
\newcommand{\wis}[1]{{\text{\em \usefont{OT1}{cmtt}{m}{n} #1}}}
\newcommand{\C}{\mathbb{C}}
\newcommand{\PP}{\mathbb{P}}
\theoremstyle{plain}
\newtheorem{theorem}{Theorem}
\newtheorem{proposition}[theorem]{Proposition}
\newtheorem{example}[theorem]{Example}
\numberwithin{equation}{section}
\begin{document}

\maketitle

\begin{abstract}
In this note we give an explicit description of the irreducible components of the reduced point varieties of quantum polynomial algebras.
\end{abstract}

Consider the quantum polynomial algebra on $k+1$-variables with quantum commutation relations
\[
A_Q = \C \langle u_o,u_1,\hdots,u_k \rangle/(u_i u_j - q_{ij} u_j u_i, 0 \leq i,j \leq k) \]
where the entries of the $k+1 \times k+1$ matrix $Q=(q_{ij})_{i,j}$ are all non-zero and satisfy $q_{ii}=1$ and $q_{ji} = q_{ij}^{-1}$.

As $A_Q$ is a graded connected, iterated Ore-extension, it is Auslander-regular of dimension $k+1$ and we can consider the corresponding non-commutative projective space $\mathbb{P}^k_Q = \wis{Proj}(A_Q)$ in the sense of \cite{ATV1}.

Recall from \cite{ATV1} that a {\em point module} $P=P_0 \oplus P_1 \oplus \hdots$ of $A_Q$ is a graded left $A_Q$-module which is cyclic (that is, generated by one element in degree $0$), critical (implying that normalizing elements of $A_Q$ act on it either as zero or a non-zero divisor) with Hilbert-series $(1-t)^{-1}$.

Hence, a point module $P$ is necessarily of the form $A_Q/(A_Q l_1 + \hdots + A_Q l_k)$ where the $l_i$ are linearly independent degree one elements in $A_Q$, and hence determines a unique point $x_P=\mathbb{V}(l_1,\hdots,l_k)$ in commutative $k$-dimensional projective space $\mathbb{P}^n = \wis{Proj}((A_Q)_1^{\ast})$.

In this note we will describe the reduced subvariety of $\mathbb{P}^k$, which is called {\em the point-variety of $A_Q$}
\[
\wis{pts}(A_Q) = \{ x_P \in \mathbb{P}^k~|~P~\text{a point module of $A_Q$} \}.
\]
We can approach this problem inductively.

\begin{proposition} For each of the generators $u_i$ of $A_Q$ we have
\[
\wis{pts}(A_Q) = (\wis{pts}(A_Q) \cap \mathbb{V}(u_i^{\ast})) \sqcup (\wis{pts}(A_Q) \cap \mathbb{X}(u_i^{\ast})) \]
and these pieces can be described as follows:
\begin{enumerate}
\item{$\wis{pts}(A_Q) \cap \mathbb{V}(u_i^{\ast}) = \wis{pts}(A_{\overline{Q}})$ where $\overline{Q}$ is the $k \times k$ matrix obtained from $Q$ after deleting the $i$-th row and column.}
\item{$\wis{pts}(A_Q) \cap \mathbb{X}(u_i^{\ast})$ is the affine variety
\[
\bigcap_{j,l \not= i} \mathbb{V}((r_{jl}-1)v_j^{\ast} v_l^{\ast}) \]
for the polynomial functions on $\mathbb{X}(u_i^{\ast})$: $v_j^{\ast} = u_j^{\ast}(u_i^{\ast})^{-1}$ and with $r_{jl} = q_{ij}q_{jl}q_{il}^{-1}$}
\item{In particular, $\wis{pts}(A_Q) = \mathbb{P}^k$ if and only if the rank of the matrix $Q$ is equal to one.}
\end{enumerate}
\end{proposition}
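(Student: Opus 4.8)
The plan is to recover a point module $P=A_Q/(A_Ql_1+\cdots+A_Ql_k)$ from the sequence of points it traces out under the shift, in the spirit of \cite{ATV1}. Choosing a basis vector $e_n$ of each one-dimensional piece $P_n$ (with $e_0$ the cyclic generator) and writing $u_a\cdot e_n=c_{a,n}e_{n+1}$, the defining relations $u_au_b=q_{ab}u_bu_a$ become $c_{b,n}c_{a,n+1}=q_{ab}c_{a,n}c_{b,n+1}$ for all $a,b$, while cyclicity forces $p_n:=(c_{0,n}:\cdots:c_{k,n})$ to be an honest point of $\mathbb{P}^k$. Thus $\wis{pts}(A_Q)$ is exactly the set of $p_0=x_P$ that extend to an infinite sequence $p_0,p_1,\ldots$ with each consecutive pair lying on the correspondence
\[
\Gamma=\{(x,y)\in\mathbb{P}^k\times\mathbb{P}^k\mid x_by_a=q_{ab}x_ay_b\ \text{ for all }a,b\}.
\]
The claimed splitting along $\mathbb{V}(u_i^{\ast})\sqcup\mathbb{X}(u_i^{\ast})$ is then set-theoretically automatic, since these are the vanishing locus of the coordinate $u_i^{\ast}$ and its complement; all the content sits in identifying the two pieces.

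For item (1) I would take $x_P\in\mathbb{V}(u_i^{\ast})$, i.e.\ $c_{i,0}=0$, and feed this into the $a=i$ relation $c_{b,n}c_{i,n+1}=q_{ib}c_{i,n}c_{b,n+1}$. Since some $c_{b,n}\neq 0$ as $p_n$ is a genuine point, $c_{i,n}=0$ forces $c_{i,n+1}=0$, so by induction $u_i$ annihilates $P$ entirely. As $u_i$ is a normal element, $P$ is then a module over $A_Q/(u_i)\cong A_{\overline{Q}}$, again cyclic, critical, of Hilbert series $(1-t)^{-1}$; conversely every point module of $A_{\overline{Q}}$ inflates to one of $A_Q$ supported on $\mathbb{V}(u_i^{\ast})\cong\mathbb{P}^{k-1}=\wis{Proj}((A_{\overline{Q}})_1^{\ast})$. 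This gives $\wis{pts}(A_Q)\cap\mathbb{V}(u_i^{\ast})=\wis{pts}(A_{\overline{Q}})$.

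For item (2) I work on the chart $\mathbb{X}(u_i^{\ast})$, normalizing $c_{i,n}=1$ so that $v_j^{\ast}(p_n)=c_{j,n}$. The $a=i$ relations read $c_{b,n}=q_{ib}c_{b,n+1}$, which together with $p_{n+1}\neq 0$ keep $c_{i,n+1}\neq 0$ and pin down the unique candidate orbit $c_{b,n}=q_{ib}^{-n}c_{b,0}$. Substituting this into the remaining relations ($a,b\neq i$) collapses each one to $q_{ia}^{-1}(1-r_{ab})c_{a,n}c_{b,n}=0$, i.e.\ to $(r_{jl}-1)v_j^{\ast}v_l^{\ast}=0$ at $p_0$, cutting out exactly the stated affine variety. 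Conversely, for $x\in\mathbb{X}(u_i^{\ast})$ satisfying these equations, the explicit sequence $c_{i,n}=1,\ c_{b,n}=q_{ib}^{-n}x_b$ satisfies all relations (the relations involving the index $i$ hold identically, the rest by hypothesis) and builds a genuine point module with $x_P=x$. The hard part is precisely this converse: one must verify that the single necessary one-step constraint is already sufficient for an infinite extension and yields a critical module rather than merely a truncated one, and it is the closed-form orbit above that settles this.

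Finally, for item (3) I would first record that $Q$ has rank one exactly when $q_{ab}q_{bc}=q_{ac}$ for all $a,b,c$, equivalently $q_{ij}=v_i/v_j$ for nonzero scalars $v_i$; indeed a rank-one $Q=(v_iw_j)$ subject to $q_{ii}=1$ and $q_{ji}=q_{ij}^{-1}$ forces $w_i=v_i^{-1}$. The key observation is that $r_{jl}=q_{ij}q_{jl}q_{il}^{-1}=1$ is nothing but the cocycle identity $q_{ji}q_{il}=q_{jl}$. If $\mathrm{rk}\,Q=1$ then every $r_{jl}=1$, so by item (2) each chart $\mathbb{X}(u_i^{\ast})$ lies entirely in $\wis{pts}(A_Q)$, and the union of charts is all of $\mathbb{P}^k$ (the hyperplane pieces following from item (1) and induction, since $\overline{Q}$ is again rank one). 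Conversely, if $\wis{pts}(A_Q)=\mathbb{P}^k$ then a single chart, say $\mathbb{X}(u_0^{\ast})$, is fully contained, which by item (2) forces $r_{jl}=1$ for all $j,l\neq 0$, hence $q_{jl}=q_{j0}q_{0l}$ and $q_{ab}=v_a/v_b$ with $v_a=q_{a0}$; that is, $\mathrm{rk}\,Q=1$.
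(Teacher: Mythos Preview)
Your argument is correct but follows a different route from the paper. There the dichotomy comes from criticality: $u_i$ is normal, so it acts on a point module $P$ either as zero or as a non-zero divisor. In the first case $P$ is an $A_Q/(u_i)\cong A_{\overline{Q}}$-module, giving (1). In the second the paper inverts $u_i$; the localization $A_Q[u_i^{-1}]$ is strongly graded with degree-zero part the quantum affine algebra $A_R$ on $v_j=u_ju_i^{-1}$ (a short commutator check gives $v_jv_l=r_{jl}v_lv_j$ with $r_{jl}=q_{ij}q_{jl}q_{il}^{-1}$), and by the strongly-graded equivalence $P$ is determined by a one-dimensional $A_R$-module, i.e.\ a solution of $(1-r_{jl})a_ja_l=0$, which is (2). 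Part (3) is then the same $2\times 2$-minor computation you carry out. You instead work directly with the ATV point-sequence and the bilinear relations $c_{b,n}c_{a,n+1}=q_{ab}c_{a,n}c_{b,n+1}$: this is more elementary and self-contained (no appeal to criticality, Ore localization, or strongly graded rings; you recover the zero/non-zero-divisor dichotomy from the relations themselves) and it makes the shift explicit as $c_{b,n}=q_{ib}^{-n}c_{b,0}$ on the chart, which is the paper's automorphism $\sigma$ seen at the level of points. The paper's approach, in return, explains structurally why the open chart is governed by another quantum polynomial algebra $A_R$ and transports unchanged to any connected graded algebra with a regular normal element in degree one.
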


\begin{proof}
As $u_i$ is normalizing in $A_Q$ it acts either as zero on a point module $P$ or as a non-zero divisor. The point modules on which $u_i$ acts as zero are $\wis{pts}(A_Q) \cap \mathbb{V}(u_i^{\ast})$, correspond to point modules of the quantum polynomial algebra $A_Q/(u_i) \simeq A_{\overline{Q}}$ and are contained in the projective subspace $\mathbb{P}^{k-1} = \mathbb{V}(u_i^{\ast}) = \wis{Proj}((A_{\overline{Q}})_1^{\ast})$. This proves (1).

If $u_i$ acts as a non-zero divisor on the point module $P$, it extends to a graded module over the localization of $A_Q$ at the multiplicative system of homogeneous elements $\{ 1, u_i,u_i^2, \hdots \}$ and as this localization has an invertible degree one element it is a {\em strongly graded algebra}, see \cite[\S I.3]{NastaFVO}, and hence is a skew Laurent-polynomial extension
\[
A_Q[u_i^{-1}] \simeq (A_Q[u_i^{-1}])_0 [ u_i, u_i^{-1}, \sigma] \]
where $(A_Q[u_i^{-1}])_0$ is the degree zero part of the localization and $\sigma$ the automorphism on it induced by conjugation with $u_i$.

The algebra $(A_Q[u_i^{-1}])_0$ is generated by the $k$ elements $v_j = u_j u_i^{-1}$ and as we have the commuting relations $u_ju_i^{-1} = q_{ij} u_i^{-1} u_j$ we have
\[
v_j v_l = q_{ij} u_i^{-1} u_j u_l u_i^{-1} = q_{ij}q_{jl} u_i^{-1} u_l u_j u_i^{-1} = q_{ij}q_{jl}q_{il}^{-1} u_l u_i^{-1} u_j u_i^{-1} = q_{ij}q_{jl}q_{il}^{-1} v_l v_j \]
Therefore, $(A_Q[u_i^{-1}])_0$ is again a quantum polynomial algebra of the form $A_R$ where $R = (r_{jl})_{j,l}$ is the $k \times k$ matrix with entries
\[
r_{jl} = q_{ij}q_{jl}q_{il}^{-1} \]
Because $(A_Q[u_i^{-1}])_0$ is strongly graded, the localization $P[u_i^{-1}]$ (and hence the point module $P$) is fully determined by the one-dimensional representation $P[u_i^{-1}]_0$ of $(A_Q[u_i^{-1}])_0$, see \cite[\S I.3]{NastaFVO} or \cite[Proposition 7.5]{ATV1}.

One-dimensional representations of $A_R$ correspond to points $(a_j)_j \in \mathbb{A}^k$ (via the association $v_j \mapsto a_j$ for all $j \not= i$) satisfying all the defining relations of $A_R$, that is, they must satisfy the relations
\[
(1-r_{jl})a_j a_l = 0 \]
which proves (2).

As for (3), observe that $\wis{pts}(A_Q) \cap \mathbb{X}(u_i^{\ast}) = \mathbb{A}^k$ if and only if all the $r_{jl}=1$. This in turn is equivalent, by the definition of the $r_{jl}$ to
\[
\forall j,l \not= i~:~q_{jl} = q_{il} q_{ij}^{-1} \]
but then, any $2 \times 2$ minor of $Q$ has determinant zero as
\[
\begin{bmatrix} q_{ju} & q_{jv} \\ q_{lu} & q_{lv} \end{bmatrix} = \begin{bmatrix} \frac{q_{iu}}{q_{ij}} & \frac{q_{iv}}{q_{ij}} \\
\frac{q_{iu}}{q_{il}} & \frac{q_{iv}}{q_{il}} \end{bmatrix} \]
and the same applies to minors involving the $i$-th row or column of $Q$. Hence, $Q$ is of rank one (and so is $\overline{Q}$) finishing the proof.
\end{proof}

This result also allows us to describe the irreducible components of the point-varieties of quantum polynomial algebras directly. Take a quantum polynomial algebra on $n+1$ variables
\[
A_M = \C \langle x_0,x_1,\hdots,x_n \rangle / (x_ix_j - m_{ij} x_j x_i,~0 \leq i,j \leq n) \]
Consider the points $\delta_i = [\delta_{i0}:\hdots:\delta_{in}]$ in $\mathbb{P}^n = \wis{Proj}((A_M)_0^{\ast})$. For any $k+1$-tuple $(i_0,i_1,\hdots,i_k)$ with $0 \leq i_0 < i_1 < \hdots < i_k \leq n$ consider the $k$-dimensional projective subspace $\mathbb{P}(i_0,\hdots,i_k) \subset \mathbb{P}^n$ spanned by the points $\delta_{i_j}$. Also denote the $k+1 \times k+1$ minor of $M$ by
\[
M(i_0,\hdots,i_k) = \begin{bmatrix} 1 & m_{i_0 i_1} & \hdots & m_{i_0 i_k} \\
m_{i_1 i_0} & 1 & \hdots & m_{i_1 i_k} \\
\vdots & \vdots & \ddots & \vdots \\
m_{i_k i_o} & m_{i_k i_1} & \hdots & 1 \end{bmatrix} \]
With these notations, we deduce from Proposition~1:

\begin{theorem} The reduced point-variety of the quantum polynomial algebra $A_M$ is equal to
\[
\wis{pts}(A_M) = \bigcup_{rk(M(i_0,i_1,\hdots,i_k))=1} \mathbb{P}(i_0,i_1,\hdots,i_k) \subset \mathbb{P}^n \]
As a consequence, the union of all lines $\cup_{i,j} \mathbb{P}(i,j)$ is always contained in $\wis{pts}(A_M)$ and will be equal to it for generic $M$.
\end{theorem}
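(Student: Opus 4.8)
The plan is to derive the displayed formula by induction on the number $n+1$ of variables, using Proposition~1 as the engine for the inductive step, and then to obtain the two assertions of the closing sentence as easy corollaries of the formula. The base cases $n=0$ and $n=1$ are immediate, since there $\wis{pts}(A_M)=\mathbb{P}^n$ and the only relevant minors $M(0)$ and $M(0,1)$ have rank one (for $M(0,1)$ because $\det\begin{bmatrix}1&m_{01}\\m_{01}^{-1}&1\end{bmatrix}=0$).

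For the inductive step I fix the generator $x_0$ and apply the decomposition of Proposition~1 with $i=0$. Part~(1) identifies $\wis{pts}(A_M)\cap\mathbb{V}(x_0^{\ast})$ with $\wis{pts}(A_{\overline M})$ for the submatrix $\overline M$ of $M$ on the index set $\{1,\dots,n\}$; by the induction hypothesis this is the union of the subspaces $\mathbb{P}(i_0,\dots,i_k)$ over all index sets $\{i_0,\dots,i_k\}\subseteq\{1,\dots,n\}$ with $rk(M(i_0,\dots,i_k))=1$. Part~(2) identifies $\wis{pts}(A_M)\cap\mathbb{X}(x_0^{\ast})$, in the chart $\mathbb{A}^n$ with coordinates $v_j^{\ast}=x_j^{\ast}(x_0^{\ast})^{-1}$, with $\bigcap_{j,l\neq 0,\ r_{jl}\neq 1}\mathbb{V}(v_j^{\ast} v_l^{\ast})$, where $r_{jl}=m_{0j}m_{jl}m_{0l}^{-1}$. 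A point of this chart lies in the latter variety exactly when its support $T\subseteq\{1,\dots,n\}$ satisfies $r_{jl}=1$ for all distinct $j,l\in T$; hence the variety is the union of the coordinate subspaces $\mathbb{V}(v_j^{\ast}:\ j\notin T)$ over all such $T$, and each such coordinate subspace is precisely $\mathbb{P}(\{0\}\cup T)\cap\mathbb{X}(x_0^{\ast})$.

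The main work — and, I expect, the only genuine obstacle — is gluing these two pieces back into the single union in the statement. The key ingredient is the equivalence
\[
r_{jl}=1\ \text{for all distinct}\ j,l\in T \quad\Longleftrightarrow\quad rk(M(\{0\}\cup T))=1,
\]
which is exactly the vanishing-of-$2\times 2$-minors computation performed in the proof of Proposition~1(3), now applied to the submatrix of $M$ on $\{0\}\cup T$ instead of to $Q$ itself. Since rank one passes to every square submatrix, $rk(M(\{0\}\cup T))=1$ also forces $rk(M(T))=1$, so $\mathbb{P}(T)=\mathbb{P}(\{0\}\cup T)\cap\mathbb{V}(x_0^{\ast})$ already lies in the part described by (1); combined with the chart piece $\mathbb{P}(\{0\}\cup T)\cap\mathbb{X}(x_0^{\ast})$ from (2), this recovers the whole subspace $\mathbb{P}(\{0\}\cup T)$ inside $\wis{pts}(A_M)$. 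Conversely, every point of $\wis{pts}(A_M)$ lies, by the disjoint decomposition, either in some $\mathbb{P}(i_0,\dots,i_k)$ with $0\notin\{i_0,\dots,i_k\}$ and $rk(M(i_0,\dots,i_k))=1$, or in some $\mathbb{P}(\{0\}\cup T)$ with $rk(M(\{0\}\cup T))=1$; in both cases the index set supports a rank-one minor of $M$. This gives the asserted equality and closes the induction.

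Finally, the closing sentence follows from the formula. Every $2\times 2$ submatrix $M(i,j)$ has determinant $1-m_{ij}m_{ij}^{-1}=0$, hence rank one, so every line $\mathbb{P}(i,j)$ appears in the union and $\bigcup_{i,j}\mathbb{P}(i,j)\subseteq\wis{pts}(A_M)$, the singletons $\mathbb{P}(i)$ being absorbed into lines. For generic $M$ I would argue that no minor of size at least $3$ has rank one: if $rk(M(i_0,i_1,i_2))=1$ then in particular $m_{i_1 i_2}=m_{i_0 i_1}m_{i_0 i_2}^{-1}$, a nontrivial algebraic condition on the parameters $(m_{ij})_{i<j}$, and the finitely many such conditions, taken over all triples, cut out a proper closed subset of the parameter space. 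Off that subset — a fortiori off the locus where any larger minor degenerates to rank one — the only rank-one minors are those of size $1$ and $2$, so $\wis{pts}(A_M)=\bigcup_{i,j}\mathbb{P}(i,j)$.
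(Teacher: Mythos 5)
Your proof is correct, and for the main formula it follows essentially the same route as the paper: the paper's proof consists of one sentence asserting that the description follows from Proposition~1 (by passing to the quotient algebras $A_M/(x_{j_1},\hdots,x_{j_{n-k}})$ and invoking part~(3)), and your induction on the number of variables --- with the support analysis in the affine chart and the gluing of $\mathbb{P}(T)$ with $\mathbb{P}(\{0\}\cup T)\cap\mathbb{X}(x_0^{\ast})$ --- is exactly the argument being compressed there. Writing it out is a genuine service, since the equivalence between ``$r_{jl}=1$ for all distinct $j,l\in T$'' and ``$rk(M(\{0\}\cup T))=1$'' is precisely where the two parts of Proposition~1 have to be reconciled, and the paper leaves that implicit. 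Where you genuinely diverge is the genericity claim: the paper exhibits a single witness, the matrix with all $m_{ij}=-1$ for $i\neq j$, whose $3\times 3$ minors all have rank $3$ (and offers an alternative via the Clifford-algebra results of \cite{LeBruynCentral}), whereas you observe directly that rank one of any $3\times 3$ minor imposes a nontrivial equation on the parameters, so the union of these bad loci is a proper closed subset. Your version needs no explicit example and makes clear why larger degenerate minors are automatically excluded; the paper's version has the side benefit of producing a concrete algebra realizing the generic answer. One cosmetic slip: the rank-one condition on $M(i_0,i_1,i_2)$ reads $m_{i_1 i_2}=m_{i_0 i_2}m_{i_0 i_1}^{-1}$ rather than $m_{i_0 i_1}m_{i_0 i_2}^{-1}$; this does not affect your argument, since either way it is a nontrivial closed condition.
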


\begin{proof} As $\mathbb{P}(i_0,\hdots,i_k) = \wis{Proj}((A_Q)_0^{\ast})$ with $A_Q = A_M/(x_{j_1},\hdots,x_{j_{n-k}})$ where $\{ 0,1,\hdots,n+1 \} = \{ i_0,i_1,\hdots,i_k \} \sqcup \{ j_1,\hdots,j_{n-k} \}$, the description of $\wis{pts}(A_M)$ follows from Proposition~1.

As for the generic statement, consider the matrix $M$ with $m_{ij}=-1$ if $i \not= j$. Clearly, as any $3 \times 3$ minor
\[
M(i,j,k) = \begin{bmatrix} 1 & -1 & -1 \\ -1 & 1 & -1 \\ -1 & -1 & 1 \end{bmatrix} \]
has rank 3, we have that $\wis{pts}(A_M) = \cup_{i,j} \mathbb{P}(i,j)$. Alternatively, one can use the results of \cite{LeBruynCentral}. In this case, the algebra $A_M$ is a Clifford algebra over $\C[x_0^2,\hdots,x_n^2]$ with associated quadratic form $D=\wis{diag}(x_0^2,\hdots,x_n^2)$. In \cite{LeBruynCentral} it was shown that for a Clifford algebra, the point-variety is a double cover $\xymatrix{\wis{pts}(A_M) \ar@{->>}[r] & \mathbb{V}(minor(3,D))} \subset \mathbb{P}^n_{[x_0^2:\hdots:x_n^2]}$, ramified over $\mathbb{V}(minor(2,D))$. For the given $M$ one easily checks that $\mathbb{V}(minor(3,D))$ is one-dimensional, hence so is $\wis{pts}(A_M)$.
\end{proof}

We leave the combinatorial problem of determining which subvarieties of $\mathbb{P}^n$ can actually arise as a suggestion for further research. Not all unions as above can occur.

\begin{example} In $\mathbb{P}^3$ only two of the $\mathbb{P}^2$ (out of four possible) can arise in a proper subvariety $\wis{pts}(A_M) \subsetneq \mathbb{P}^3$. For example, take
\[
M = \begin{bmatrix} 1 & a & b & x \\ a^{-1} & 1 & a^{-1} b & c \\ b^{-1} & ab^{-1} & 1 & bca^{-1} \\ x^{-1} & c^{-1} & ba^{-1}c^{-1} & 1 \end{bmatrix} \]
then, for generic $x$ we have
\[
\wis{pts}(A_M) = \mathbb{P}(0,1,2) \cup \mathbb{P}(1,2,3) \cup \mathbb{P}(0,3) \]
but once we want to include another $\mathbb{P}^2$, for example, $\mathbb{P}(0,1,3)$ we need the relation $x=ac$ in which case $M$ becomes of rank one, whence $\wis{pts}(A_M) = \mathbb{P}^3$.
\end{example}


\begin{thebibliography}{10}

\bibitem{ATV1}
Michael Artin, John Tate and Michel Van den Bergh {\it Modules over regular algebras of dimension 3}, Inventiones mathematicae {\bf 106}, 1, 335-388, Springer (1991)


\bibitem{LeBruynCentral}
Lieven Le Bruyn, {\it Central singularities of quantum spaces}, Journal of Algebra {\bf 177}, 1, 142-153, Elsevier (1995)

\bibitem{NastaFVO}
C. Nastasescu and F. Van Oystaeyen, {\it Graded Ring Theory}, North Holland Publ. Co. (1982)


\bibitem{thequestion}
user2013, {\it Point modules of quantum projective space $\PP^n$} Mathoverflow question, October 11 (2012) \\{\tt mathoverflow.net/questions/109347/point-modules-of-quantum-projective-space-mathbbpn}

\end{thebibliography}
\end{document}